
\documentclass[a4paper,12pt]{amsart}
\usepackage{amssymb}
\usepackage{amsmath}
\usepackage[all]{xy}

\usepackage[pagebackref]{hyperref}

\newcommand{\R}{{\rm R}}

\oddsidemargin 12mm
\marginparwidth 5mm
\topmargin -7mm
\textheight 230mm
\textwidth 150mm
\footskip 10mm

\newcommand{\ad}{ad}

\newcommand{\Spec}{\text{\rm Spec}\,}
\DeclareMathOperator{\Hom}{Hom}
\DeclareMathOperator{\Aut}{Aut}
\DeclareMathOperator{\Cent}{Cent}

\DeclareMathOperator{\Sym}{Sym}
\DeclareMathOperator{\Ga}{{\mathbf G}_a}
\newcommand{\id}{\text{\rm id}}

\DeclareMathOperator{\ZZ}{{\mathbb Z}}
\DeclareMathOperator{\QQ}{{\mathbb Q}}

\DeclareMathOperator{\Lie}{Lie}

\newtheorem{lem}{Lemma}[section]
\newtheorem*{lem*}{Lemma}%[section]
\newtheorem*{thm*}{Theorem}
\newtheorem{thm}[lem]{Theorem}

\theoremstyle{definition}{  \newtheorem{rem}[lem]{Remark}  }
\theoremstyle{definition}{   }
\theoremstyle{definition}{  \newtheorem{defn}[lem]{Definition} }

\usepackage{hyperref}

\newcommand{\st}{\scriptstyle}
\DeclareMathOperator{\Gm}{{\mathbb G}_m}
\newcommand{\RR}{\mathbb R}
\newcommand{\NN}{\mathbb N}

\begin{document}

\title{On Kazhdan's property (T) for isotropic reductive groups}

\author{A.~Stavrova}
\thanks{The author was supported at different stages of her work by the RFBR grants 14-01-31515-mol\_a,
and by the "Native towns" social investment program of PJSC "Gazprom Neft" .}
\address{Chebyshev Laboratory, Department of Mathematics and Mechanics, St. Petersburg State University,
St. Petersburg, Russia}
\email{anastasia.stavrova@gmail.com}

\maketitle

\section{Introduction}
Let $R$ be a connected finitely generated commutative ring with 1, and let $G$ be an isotropic reductive group over $R$ containig a proper
parabolic subgroup $P$. In~\cite{PS} we have constructed a system of generators of the elementary
subgroup $E_P(R)=\left<U_P(R),U_{P^-}(R)\right>$ of $G(R)$ generalizing the standard elementary root unipotents
in Chevalley groups and twisted (or quasi-split) Chevalley groups. These generators $X_\alpha(v)$,
$v\in V_\alpha$, $\alpha\in\Phi_P$, satisfy a generalized Chevalley commutator formula and have other good properties.
Here $\Phi_P$ is a relative root system with respect to $P$, and $V_\alpha$, $\alpha\in\Phi_P$, are certain
finitely generated projective $R$-modules.
In~\cite{EJZK} M. Ershov, A. Jaikin-Zapirain, and M. Kassabov established Kazhdan's property $(T)$
for elementary Chevalley and twisted Chevalley groups, as well as for the corresponding Steinberg groups.
One of the main ingredients of their proof is the result that the set of standard generators is a Kazhdan subset of the corresponding group.
In the present text we extend the latter result to isotropic reductive groups and their standard generators
$X_\alpha(v)$, $v\in V_\alpha$, $\alpha\in\Phi_P$.

\section{Kazhdan's property $(T)$ and groups graded by root systems}

In this section we recall the terminology and results of~\cite{EJZK} related to Kazhdan's property $(T)$
and groups graded by root systems.

Let $G$ be an abstract (discrete) group and $S$ a subset of $G$.

\begin{defn}\cite{EJZK}
%Let $G$ be a group and $S$ a subset of $G$.
\begin{itemize}
\item[(a)] Let $V$ be a unitary representation of $G$.
A nonzero vector $v\in V$ is called \emph{$(S,\epsilon)$-invariant} if
$$\|sv-v\|\leq \epsilon\|v\| \mbox{ for any } s\in S.$$
\item[(b)] Let $V$ be a unitary representation of $G$ without nonzero
invariant vectors. The \emph{Kazhdan constant
$\kappa(G,S,V)$ } is the infimum of the set
$$
\{\epsilon> 0 : V \mbox{ contains an } (S,\epsilon)\mbox{-invariant vector}\}.
$$
\item[(c)] The \emph{Kazhdan constant $\kappa(G,S)$ }of $G$
with
respect to $S$ is the infimum of the set $\{\kappa(G,S,V)\}$ where
$V$ runs over all unitary representations of $G$ without nonzero invariant vectors.
\item[(d)] $S$ is called a \emph{Kazhdan subset of $G$} if
$\kappa(G,S)>0$.
\item[(e)] A group $G$ has \emph{Kazhdan's property $(T)$} if
$G$ has a finite Kazhdan subset.
\end{itemize}
\end{defn}

The main result of our paper establishes that the set of relative root subgroups
of a reductive group scheme is a Kazhdan subset of the elementary subgroup. In order to do that,
we use a criterion stated in~\cite{EJZK} for a very general class of abstract groups that they call
groups graded by root systems.

\begin{defn}\cite{EJZK}\label{def:EJZK-rootsys}
Let $E$ be a real vector space. A finite non-empty subset $\Phi$ of $E$
is called a \emph{root system in $E$}
if
\begin{enumerate}
\item $\Phi$ spans $E$;
\item $\Phi$ does not contain $0$;
\item $\Phi$ is closed under inversion, that is, if $\alpha \in
\Phi$ then $-\alpha \in \Phi$.
\end{enumerate}
The dimension of $E$ is called the \emph{rank of $\Phi$}.
The root system $\Phi$ is called \emph{irreducible} if it cannot be represented
as a disjoint union of two non-empty subsets, whose $\R$-spans have trivial intersection.
If any root of $\Phi$ is contained in an irreducible root subsystem of rank 2, then
$\Phi$ is called~\emph{regular}.
\end{defn}

\begin{rem}
In our work, we customarily use the Bourbaki definition of a root system~\cite{Bou},
which is more restrictive and corresponds to the notion of a \emph{classical root system} in~\cite{EJZK}.
Thus, outside of this section, we will call root systems in the sense of~Definition~\ref{def:EJZK-rootsys}
\emph{root systems in the sense of~\cite{EJZK}}.
\end{rem}

\begin{rem}\label{rem:regsys}
Note that in order for a root system to be regular in the sense of~\cite{EJZK},
it is enough that for any element $\alpha\in\Phi$ there is a non-collinear $\beta\in\Phi$
such that $\Phi$ also contains $x\alpha+y\beta$ for some non-zero $x,y\in\RR$. Indeed,
in this case $\Psi=(\RR\alpha+\RR\beta)\cap\Phi$ is an irreducible root system of rank 2.
\end{rem}

\begin{defn}\cite{EJZK}
Let $\Phi$ be a root system in a space $E$. Let $F=F(\Phi)$ denote the set
of all linear functionals $f: E \to \R$ such that
\begin{enumerate}
\item $f(\alpha) \not=0$ for all $\alpha \in \Phi$;
\item $f(\alpha) \not= f(\beta)$ for any distinct
$\alpha,\beta \in \Phi$.
\end{enumerate}
The sets $\Phi_f=\{\alpha \in \Phi \,|\, f(\alpha)
> 0\}$, $f\in F(\Phi)$, are called the \emph{Borel subsets of $\Phi$}.
%We will say that two elements $f, f' \in F$  are equivalent and write $f\sim f'$ if $\Phi_f=\Phi_{f'}$.
\end{defn}

Below we give a definition of the core of a Borel subset which differs slightly from the one in~\cite{EJZK}.
Its equivalence to the original definition follows from~\cite[Lemma 4.4]{EJZK}.

\begin{defn}
The \emph{core} of a Borel subset $\Phi_f$ is the set
$$
C_f=\{ \alpha\in\Phi_f\ |\ \nexists g\in F(\Phi)\colon \RR\alpha=\RR(\Phi_f\cap\Phi_g)\}.
$$
\end{defn}

\begin{defn}\cite{EJZK}
Let $\Phi$ be a root system and $G$ a abstract group. A \emph{$\Phi$-grading of $G$}
is a collection of subgroups
$\{X_\alpha\}_{\alpha\in\Phi}$ of $G$, called \emph{root subgroups}
 such that
\begin{enumerate}
\item $G$ is generated by $\cup X_{\alpha}$;
\item For any $\alpha, \beta\in \Phi$, with $\alpha\not\in\RR_{<0}\beta$,
one has
$$
[X_\alpha,X_\beta] \subseteq \langle X_\gamma \mid \gamma = a \alpha +
b \beta \in\Phi, \ a,b \geq 1 \rangle.
$$
\end{enumerate}
%If $\{X_\alpha\}_{\alpha\in\Phi}$ is a collection of subgroups
%satisfying (ii) but not necessarily (i), we will simply say
%that $\{X_\alpha\}_{\alpha\in\Phi}$ is a \emph{$\Phi$-grading}
%(without specifying the group).
One also says that the grading is \emph{strong},
if for any Borel subset $\Phi_f$, $f\in F(\Phi)$, and any root $\gamma\in C_f$,
one has
$$
X_\gamma \subseteq \langle X_\beta \mid \beta \in \Phi_f\mbox{ and }
\beta\not\in\RR\gamma\rangle.
$$
\end{defn}

\begin{thm}\cite[Theorem 5.1]{EJZK}\label{thm:EJZK-Ksubset}
Let $\Phi$ be a regular root system in the sense of~\cite{EJZK}, and let
$G$ be a group which admits a strong $\Phi$-grading $\{X_{\alpha}\}$.
Then $\cup X_{\alpha}$ is a Kazhdan subset of $G$, and moreover
the Kazhdan constant $\kappa(G,\cup X_\alpha)$ is bounded below
by a constant $\kappa_\Phi$ which depends only on the root system $\Phi$.
\end{thm}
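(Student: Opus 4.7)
The plan is to follow the standard spectral strategy for Kazhdan-type results on groups generated by root subgroups. Fix a unitary representation $V$ of $G$ without nonzero invariant vectors, and suppose $v\in V$ is an $(\cup X_\alpha,\epsilon)$-invariant unit vector; the goal is to bound $\epsilon$ from below by a constant $\kappa_\Phi$ that depends only on the combinatorics of $\Phi$.

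First I would reduce everything to the rank-2 case. Regularity of $\Phi$ guarantees (see Remark~\ref{rem:regsys}) that every root lies in an irreducible rank-2 subsystem $\Psi$. For such a $\Psi$ the subgroup $G_\Psi=\langle X_\alpha\mid\alpha\in\Psi\rangle$ inherits a $\Psi$-grading from $G$, and the restriction remains strong because the core condition for a Borel subset of $\Psi$ is determined inside $\Psi$. One then needs a direct rank-2 estimate: for each of the irreducible rank-2 types one shows, using the Chevalley-type commutator relations in the grading together with the strong-grading elimination of the extremal roots in $\Psi_f$, that almost-invariance under $\cup_{\alpha\in\Psi} X_\alpha$ propagates to almost-invariance under the whole ``positive'' subgroup $U_\Psi=\langle X_\alpha\mid\alpha\in\Psi_f\rangle$, with an explicit constant $\epsilon_0(\Psi)$.

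Next I would assemble the global estimate from these rank-2 building blocks. Fix a Borel subset $\Phi_f$, $f\in F(\Phi)$, and consider the unipotent-like subgroup $U_f=\langle X_\alpha\mid\alpha\in\Phi_f\rangle$. The strong grading provides, for each core root $\gamma\in C_f$, the inclusion
\[
X_\gamma\subseteq\bigl\langle X_\beta\mid\beta\in\Phi_f,\ \beta\not\in\RR\gamma\bigr\rangle,
\]
and by Remark~\ref{rem:regsys} one can furthermore choose such a $\beta$ so that $\beta$ and $\gamma$ lie in a common rank-2 subsystem. Iterating this and using the rank-2 estimate at each step, one proves by induction on $|C_f|$ that an $(\cup X_\alpha,\epsilon)$-invariant vector is close, with an error controlled by a sum of rank-2 constants $\epsilon_0(\Psi)$, to a vector fixed by all of $U_f$.

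Finally, applying the argument to $\Phi_f$ and to the opposite Borel subset $\Phi_{-f}=-\Phi_f$ yields a vector simultaneously almost fixed by $U_f$ and by $U_{-f}$. Since $U_f$ and $U_{-f}$ together generate $G$, this vector is almost $G$-invariant, contradicting the absence of invariant vectors once $\epsilon$ is smaller than a suitable $\kappa_\Phi$. The main obstacle I expect is the combinatorial bookkeeping in the inductive elimination of core roots: one has to order the steps so that each appeal to the rank-2 estimate is applied to a pair already handled, and show that the accumulated error depends on $\Phi$ alone, uniformly in the group $G$ and the representation $V$.
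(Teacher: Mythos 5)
The paper does not prove this statement at all: it is imported verbatim as \cite[Theorem 5.1]{EJZK} and used as a black box, so there is no internal proof to compare yours against. Judged on its own terms, your sketch assembles the right objects (Borel subsets $\Phi_f$, their cores $C_f$, the subgroups $U_f=\langle X_\alpha\mid\alpha\in\Phi_f\rangle$, the strong-grading inclusion for core roots), but it rests on a step that fails: almost-invariance under a set of generators does not propagate, with controlled constants, to almost-invariance under the (typically infinite) subgroup they generate. This affects your rank-2 step (``almost-invariance under $\cup_{\alpha\in\Psi}X_\alpha$ propagates to almost-invariance under the whole positive subgroup $U_\Psi$'') and, fatally, your final step: a unit vector that is $\epsilon$-moved by every element of $U_f$ and of $U_{-f}$ need not be close to a $G$-invariant vector merely because $U_f$ and $U_{-f}$ generate $G$ --- elements of $G$ are unboundedly long words in the two subgroups, so the triangle inequality gives nothing uniform. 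If that implication were valid, the whole theory would collapse to a triviality.

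What is actually needed, and what \cite{EJZK} supplies, is a spectral mechanism: one first produces vectors that are \emph{exactly} invariant under each subgroup $U_f$ (by projecting onto the subspace $V^{U_f}$ and bounding the distance from $v$ to it), and then uses a quantitative bound on the codistance of the family of subspaces $\{V^{U_f}\}_f$ --- this is their generalized spectral criterion --- to force a nonzero $G$-invariant vector. The strong grading enters precisely in controlling the distance from $v$ to $V^{U_f}$: the core subgroups $\langle X_\gamma\mid\gamma\in C_f\rangle$ are the obstruction, and the inclusion $X_\gamma\subseteq\langle X_\beta\mid\beta\in\Phi_f,\ \beta\notin\RR\gamma\rangle$ is what allows one to eliminate them. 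Regularity is likewise used to feed rank-2 subsystems into the codistance estimates, not to glue together independent rank-2 almost-invariance statements. In short, your plan names the correct combinatorial ingredients but omits the analytic engine (codistance and the passage from almost-invariant to exactly invariant vectors) without which every ``propagation'' step in the outline is unjustified.
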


\section{Preliminaries on isotropic reductive groups}

\subsection{Parabolic subgroups and elementary subgroups}

Let $A$ be a commutative ring. Let $G$ be an isotropic reductive group scheme over $A$, and
let $P$ be a parabolic subgroup of $G$ in the sense of~\cite{SGA3}.
Since the base $\Spec A$ is affine, the group $P$ has a Levi subgroup $L_P$~\cite[Exp.~XXVI Cor.~2.3]{SGA3}.
There is a unique parabolic subgroup $P^-$ in $G$ which is opposite to $P$ with respect to $L_P$,
that is $P^-\cap P=L_P$, cf.~\cite[Exp. XXVI Th. 4.3.2]{SGA3}.  We denote by $U_P$ and $U_{P^-}$ the unipotent
radicals of $P$ and $P^-$ respectively.

\begin{defn}
\label{defn:E_P}
The \emph{elementary subgroup $E_P(A)$ corresponding to $P$} is the subgroup of $G(A)$
generated as an abstract group by $U_P(A)$ and $U_{P^-}(A)$.
\end{defn}

Note that if $L'_P$ is another Levi subgroup of $P$,
then $L'_P$ and $L_P$ are conjugate by an element $u\in U_P(A)$~\cite[Exp. XXVI Cor. 1.8]{SGA3}, hence
$E_P(A)$ does not depend on the choice of a Levi subgroup or of an opposite subgroup
$P^-$, respectively. We suppress the particular choice of $L_P$ or $P^-$ in this context.

\begin{defn}
A parabolic subgroup $P$ in $G$ is called
\emph{strictly proper}, if it intersects properly every normal semisimple subgroup of $G$.
\end{defn}

We  will use the following result that is a combination
of~\cite{PS} and~\cite[Exp. XXVI, \S 5]{SGA3}.

\begin{lem}\label{lem:EE}
Let $G$ be a reductive group scheme over a commutative ring $A$, and let $R$ be a commutative $A$-algebra.
Assume that $A$ is a semilocal ring. Then the subgroup $E_P(R)$ of $G(R)$ is the same for any
minimal parabolic $A$-subgroup $P$ of $G$. If, moreover, $G$ contains a strictly proper parabolic $A$-subgroup,
the subgroup $E_P(R)$ is the same for any strictly proper parabolic $A$-subgroup $P$.
%(ii) If $A$ is not necessarily semilocal, but for every maximal ideal $m$ in $A$, every normal semisimple
%subgroup of $G_{A_m}$ contains $(\Gm_{,A_m})^2$, then the subgroup $E_P(R)$ of $G(R)=G_R(R)$ is the same for any
%strictly proper parabolic $R$-subgroup $P$ of $G_R$.
%In both these cases $E_P(A)$ is normal in $G(A)$.
\end{lem}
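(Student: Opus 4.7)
The plan is to combine the conjugacy statements for parabolic $A$-subgroups over a semilocal ring from \cite[Exp.~XXVI, \S 5]{SGA3} with the structural results on $E_P(R)$ from \cite{PS}. From the former, over the semilocal base $A$, any two minimal parabolic $A$-subgroups of $G$ are conjugate by an element of $G(A)$, and more generally any two parabolic $A$-subgroups of the same type are $G(A)$-conjugate. From the latter, one has the description $E_P(R)=\langle X_\alpha(v):\alpha\in\Phi_P,\ v\in V_\alpha\rangle$ in terms of relative root subgroups, their equivariance properties under conjugation by $N_G(L_P)(A)$, and the generalized Chevalley commutator formula relating the $X_\alpha$'s.

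For part~(1), let $P_0,P$ be two minimal parabolic $A$-subgroups and write $P=gP_0g^{-1}$ with $g\in G(A)\subseteq G(R)$. Conjugation by $g$ sends the generating pair $(U_{P_0}(R),U_{P_0^-}(R))$ to $(U_P(R),U_{P^-}(R))$, hence $gE_{P_0}(R)g^{-1}=E_P(R)$. The subtler point is to show that $g$ normalizes $E_{P_0}(R)$. I would decompose $g$ via a Bruhat-style decomposition relative to $P_0$ --- available over the semilocal base $A$ through \cite{SGA3} --- as a product of elements from $U_{P_0}(A)$ and $U_{P_0^-}(A)$, both already lying in $E_{P_0}(R)$, and $N_G(L_{P_0})(A)$; the latter factor preserves the family $\{X_\alpha(V_\alpha)\}_{\alpha\in\Phi_{P_0}}$, hence the subgroup $E_{P_0}(R)$, by the equivariance statement in \cite{PS}.

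For part~(2), given two strictly proper parabolic $A$-subgroups $P_1,P_2$, choose a minimal parabolic $A$-subgroup $Q_i\subseteq P_i$ for each $i$. The inclusion $E_{P_i}(R)\subseteq E_{Q_i}(R)$ is immediate from $U_{P_i}\subseteq U_{Q_i}$ and $U_{P_i^-}\subseteq U_{Q_i^-}$, while the reverse inclusion follows by expressing each generator $X_\alpha(v)$ with $\alpha\in\Phi_{Q_i}\setminus\Phi_{P_i}$ as an iterated commutator of generators indexed by $\Phi_{P_i}$ via the generalized Chevalley commutator formula of \cite{PS}; applying part~(1) to $Q_1,Q_2$ then closes the argument. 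The main obstacle I expect is the decomposition step in part~(1): a clean Bruhat-type decomposition of $G(A)$ relative to a minimal parabolic $P_0$ is not automatic over a semilocal base, and extracting from \cite[Exp.~XXVI, \S 5]{SGA3} enough structure to reduce a general $g\in G(A)$ --- modulo elements of $E_{P_0}(R)$ --- to a representative in $N_G(L_{P_0})(A)$, where the \cite{PS} equivariance applies, is the delicate technical point on which the lemma hinges.
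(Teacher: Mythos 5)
The paper does not actually prove this lemma --- it cites \cite[Theorem 2.1]{St-poly} --- so the real comparison is with the proof there, whose architecture (conjugacy of minimal parabolics over the semilocal base, plus reduction of strictly proper parabolics to minimal ones) your sketch correctly reproduces. Your part~(2) is essentially right: for a minimal $Q\subseteq P$ one has $E_P(R)\subseteq E_Q(R)$ trivially, and the reverse inclusion comes from writing each root $\alpha\in\Phi_Q$ killed by the projection $\ZZ\Phi_Q\to\ZZ\Phi_P$ (these index the root subgroups of the Levi of $P$) as $\beta+\gamma$ with $\beta,\gamma$ surviving the projection, and extracting $X_\alpha(V_\alpha)$ from $[X_\beta,X_\gamma]$; strict properness of $P$ is exactly what guarantees such $\beta,\gamma$ exist, and the needed surjectivity of the coefficient map $N_{\beta\gamma 11}$ is \cite[Lemmas 10, 11]{PS}.

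The genuine gap is the one you flag yourself in part~(1), and the tool you reach for there is the wrong one: there is no Bruhat-type decomposition of $G(A)$ into $U_{P_0}$-, $U_{P_0^-}$- and $N_G(L_{P_0})(A)$-factors over a general semilocal ring, and you would also then need an equivariance statement for the full normalizer $N_G(L_{P_0})(A)$, whereas \cite{PS} (and Lemma~\ref{lem:rootels}(ii) above) only gives it for $L(R)$. What actually closes the argument is the weaker and genuinely available decomposition
$$
G(A)=E_{P_0}(A)\cdot P_0(A)\qquad (A\ \text{semilocal}),
$$
proved by reducing modulo the Jacobson radical (over a product of fields this is the relative Bruhat decomposition of \cite{BoTi}, with Weyl representatives realized inside $E_{P_0}$), lifting the $E_{P_0}$-factor through the surjections $U_{P_0^{\pm}}(A)\to U_{P_0^{\pm}}(A/\mathrm{rad}\,A)$, and noting that an $A$-point of $G/P_0$ whose reduction lies in the big cell lies entirely in the big cell. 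Granting this, write $g=ep$ with $e\in E_{P_0}(A)$, $p\in P_0(A)$; since $p$ normalizes $P_0$, one gets $P=gP_0g^{-1}=eP_0e^{-1}$, so the conjugating element may be taken in $E_{P_0}(A)\subseteq E_{P_0}(R)$ and $E_P(R)=eE_{P_0}(R)e^{-1}=E_{P_0}(R)$ immediately --- no normalizer elements ever appear. Until you supply the displayed decomposition, part~(1) of your argument is incomplete.
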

\begin{proof}
See~\cite[Theorem 2.1]{St-poly}.
\end{proof}

\subsection{Torus actions on reductive groups}

Let $R$ be a commutative ring with 1, and let $S=(\Gm_{,R})^N=\Spec(R[x_1^{\pm 1},\ldots,x_N^{\pm 1}])$
be a split $N$-dimensional torus over $R$. Recall that the character group
$X^*(S)=\Hom_R(S,\Gm_{,R})$ of $S$ is canonically isomorphic to $\ZZ^N$.
If $S$ acts $R$-linearly on an $R$-module $V$, this module has a natural $\ZZ^N$-grading
$$
V=\bigoplus_{\lambda\in X^*(S)}V_\lambda,
$$
where
$$
V_\lambda=\{v\in V\ |\ s\cdot v=\lambda(s)v\ \mbox{for any}\ s\in S(R)\}.
$$
Conversely, any $\ZZ^N$-graded $R$-module $V$ can be provided with an $S$-action by the same rule.

Let $G$ be a reductive group scheme over $R$ in the sense of~\cite{SGA3}. Assume that $S$ acts on $G$
by $R$-group automorphisms. The associated Lie algebra functor $\Lie(G)$ then acquires
a $\ZZ^N$-grading compatible with the Lie algebra structure,
$$
\Lie(G)=\bigoplus_{\lambda\in X^*(S)}\Lie(G)_\lambda.
$$

We will use the following version of~\cite[Exp. XXVI Prop. 6.1]{SGA3}.

\begin{lem}\label{lem:T-P}
%Let $G$ be a reductive group over a commutative ring $R$, and let $S$ be a split torus of rank $n$
%acting on $G$ by group automorphisms.
Let $L=\Cent_G(S)$ be the subscheme of $G$ fixed by $S$. Let
$\Psi\subseteq X^*(S)$ be an $R$-subsheaf of sets closed under addition of characters.

(i) If $0\in\Psi$, then there exists
a unique smooth connected closed subgroup $U_\Psi$ of $G$ containing $L$ and satisfying
\begin{equation}\label{eq:LieUPsi}
\Lie(U_\Psi)=\bigoplus_{\lambda\in\Psi}\Lie(G)_\lambda.
\end{equation}
Moreover, if $\Psi=\{0\}$, then $U_\Psi=L$; if $\Psi=-\Psi$, then $U_\Psi$ is reductive; if $\Psi\cup(-\Psi)=X^*(S)$,
then $U_\Psi$ and $U_{-\Psi}$ are two opposite parabolic subgroups of $G$ with the common Levi subgroup
$U_{\Psi\cap(-\Psi)}$.

(ii) If $0\not\in\Psi$, then there exists a unique smooth connected unipotent closed subgroup $U_\Psi$ of $G$
normalized by $L$ and satisfying~\eqref{eq:LieUPsi}.
%$$
%\Lie(U_\Psi)=\bigoplus_{\lambda\in\Psi}\Lie(G)_\lambda.
%$$
\end{lem}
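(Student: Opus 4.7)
The plan is to deduce the lemma directly from \cite[Exp.~XXVI Prop.~6.1]{SGA3}, which treats this exact situation of a torus of multiplicative type acting on a reductive group scheme; the word ``version'' in the preamble signals that only bookkeeping adjustments are needed. The first step is to introduce the root data of the pair $(G,S)$: set
\[
\Phi(G,S)=\{\lambda\in X^*(S)\setminus\{0\}\ :\ \Lie(G)_\lambda\neq 0\},
\]
so that $\Lie(G)=\Lie(L)\oplus\bigoplus_{\alpha\in\Phi(G,S)}\Lie(G)_\alpha$. For each $\alpha\in\Phi(G,S)$, SGA3 provides a unique smooth connected unipotent closed subgroup $U_\alpha\subseteq G$ normalized by $L$ and stable under $S$, with $\Lie(U_\alpha)=\Lie(G)_\alpha$. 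The hypothesis that $\Psi$ is closed under addition of characters translates into $\Psi\cap\Phi(G,S)$ being a closed subset of the root set in the sense required by SGA3, so that the product of the $U_\alpha$'s in any prescribed order represents a smooth connected closed subscheme of $G$.

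For part~(i), with $0\in\Psi$, I would define $U_\Psi$ as the subgroup of $G$ generated by $L$ and the $U_\alpha$ for $\alpha\in\Psi\cap\Phi(G,S)$; smoothness, connectedness, closedness, and the Lie algebra formula then all come from SGA3. The three special cases are immediate consequences: $\Psi=\{0\}$ collapses $U_\Psi$ to $L$; if $\Psi=-\Psi$ then $U_\Psi$ is reductive by the Levi-type criterion of SGA3; and if $\Psi\cup(-\Psi)=X^*(S)$, then $U_\Psi$ and $U_{-\Psi}$ are opposite parabolic subgroups with common Levi $U_{\Psi\cap(-\Psi)}$.

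For part~(ii), with $0\notin\Psi$, the analogous construction without the Levi factor produces $U_\Psi$ as the subgroup of $G$ generated by the $U_\alpha$, $\alpha\in\Psi\cap\Phi(G,S)$; this subgroup is smooth, connected, unipotent, closed, and normalized by $L$ since each $U_\alpha$ is.

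The main obstacle I expect is uniqueness in positive characteristic, where a smooth connected closed subgroup is not determined by its Lie algebra alone. The SGA3 argument proceeds by showing that any candidate $H$ satisfying the stated hypotheses is automatically $S$-stable: $\Lie(H)$ is already a sum of $S$-weight spaces, and either containment of $L$ in case~(i) or normalization by $L$ in case~(ii) forces the $S$-action to restrict compatibly to $H$. Once $S$-stability is secured, $H$ must contain each $U_\alpha$ with $\alpha\in\Psi\cap\Phi(G,S)$, and the Lie algebra calculation then forces $H=U_\Psi$.
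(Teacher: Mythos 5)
Your overall strategy --- deduce everything from \cite[Exp.~XXVI, Prop.~6.1]{SGA3} and its proof --- is exactly the route the paper takes (its proof is a one-line reduction by faithfully flat descent to the split case of Exp.~XXII, following the proof of Exp.~XXVI Prop.~6.1). However, your chosen building blocks do not exist in general, and this is a genuine gap rather than a bookkeeping issue. You assert that for each $\alpha\in\Phi(G,S)$, SGA3 provides a smooth connected unipotent closed subgroup $U_\alpha$ normalized by $L$ with $\Lie(U_\alpha)=\Lie(G)_\alpha$ exactly. When the relative root system is non-reduced --- precisely the case this paper must handle, e.g.\ $\Phi_P$ of type $BC_n$ --- the singleton $\{\alpha\}$ is not closed under addition (since $\alpha+\alpha=2\alpha$ can again be a weight), and no such subgroup exists: one has $[\Lie(G)_\alpha,\Lie(G)_\alpha]\subseteq\Lie(G)_{2\alpha}$, and this bracket is in general nonzero (already for quasi-split $\SU_3$ with $S$ its one-dimensional maximal split torus), so $\Lie(G)_\alpha$ is not even a Lie subalgebra and cannot be the Lie algebra of any smooth subgroup. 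The correct elementary building block is the subgroup attached to the additively closed set $(\alpha)=\{i\alpha\mid i\ge 1\}$, whose Lie algebra is $\bigoplus_{i\ge 1}\Lie(G)_{i\alpha}$. Your subsequent steps --- generating $U_\Psi$ by the $U_\alpha$, and the uniqueness argument ``$H$ must contain each $U_\alpha$'' --- quantify over these nonexistent subgroups and inherit the problem.

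The repair is the paper's actual argument: pass to a faithfully flat cover $R'/R$ over which $G$ is split with a maximal torus $T\subseteq L_{R'}$; the preimage $\pi^{-1}(\Psi)$ of $\Psi$ under $X^*(T)\to X^*(S)$ is a closed subset of the absolute root system (together with $0$ in case (i)), so \cite[Exp.~XXII]{SGA3} yields the corresponding smooth closed subgroup of $G_{R'}$ with the prescribed Lie algebra, built from honest absolute root subgroups; the uniqueness statement over $R'$ forces the two pullbacks to $R'\otimes_R R'$ to coincide, so the subgroup descends to $R$. Your observation that $S\subseteq L=\Cent_G(S)$ makes any candidate $H$ containing $L$, or normalized by $L$, automatically $S$-stable is correct and is indeed the key to uniqueness; but it has to be combined with the split-case classification of smooth connected $T$-stable subgroups, not with the relative $U_\alpha$.
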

\begin{proof}
The statement immediately follows by faithfully flat descent from the standard facts about the subgroups of
split reductive groups proved in~\cite[Exp. XXII]{SGA3}; see the proof of~\cite[Exp. XXVI Prop. 6.1]{SGA3}.
\end{proof}

\begin{defn}
The sheaf of sets
$$
\Phi=\Phi(S,G)=\{\lambda\in X^*(S)\setminus\{0\}\ |\ \Lie(G)_\lambda\neq 0\}
$$
is called the \emph{system of relative roots of $G$ with respect to $S$}.
\end{defn}

Choosing a total ordering on the $\QQ$-space $\QQ\otimes_{\ZZ} X^*(S)\cong\QQ^n$, one defines the subsets
of positive and negative relative roots $\Phi^+$ and $\Phi^-$, so that $\Phi$ is a disjoint
union of $\Phi^+$, $\Phi^-$, and $\{0\}$. By Lemma~\ref{lem:T-P} the closed subgroups
$$
U_{\Phi^+\cup\{0\}}=P,\qquad U_{\Phi^-\cup\{0\}}=P^-
$$
are two opposite parabolic subgroups of $G$ with the common Levi subgroup $\Cent_G(S)$.
Thus, if a reductive group $G$ over $R$ admits a non-trivial action of a split torus,
then it has a proper parabolic subgroup. The converse is true Zariski-locally, see~Lemma~\ref{lem:relroots} below.

\subsection{Relative roots and subschemes}

In order to prove our main result, we need to use the notions of relative roots and relative root
subschemes. These notions were initially introduced and studied in~\cite{PS}, and further developed
in~\cite{St-serr}.

Let $R$ be a commutative ring. Let $G$ be a reductive group scheme over $R$. Let $P$ be a parabolic subgroup
scheme of $G$ over $R$, and let $L$ be a Levi subgroup of $P$.
By~\cite[Exp. XXII, Prop. 2.8]{SGA3} the root system $\Phi$ of $G_{\overline{k(s)}}$, $s\in\Spec R$,
is constant locally in the Zariski topology on $\Spec R$. The type of the root system of
$L_{\overline{k(s)}}$ is determined by a Dynkin subdiagram
of the Dynkin diagram of $\Phi$, which is also constant Zariski-locally on $\Spec R$
by~\cite[Exp. XXVI, Lemme 1.14 and Prop. 1.15]{SGA3}. In particular, if $\Spec R$ is connected,
all these data are constant on $\Spec R$.

\begin{lem}\label{lem:relroots}\cite[Lemma 3.6]{St-serr}
Let $G$ be a reductive group over a connected commutative ring $R$, $P$ be a parabolic subgroup of $G$, $L$ be a Levi
subgroup of $P$, and $\bar L$ be the image of $L$ under the natural
homomorphism $G\to G^{\ad}\subseteq \Aut(G)$. Let $D$ be the Dynkin diagram of the root system $\Phi$ of
$G_{\overline{k(s)}}$ for any $s\in\Spec A$. We identify $D$ with a set of simple roots of $\Phi$ such that
$P_{\overline{k(s)}}$ is a standard positive parabolic subgroup with respect to $D$. Let $J\subseteq D$
be the set of simple roots such that $D\setminus J\subseteq D$ is the subdiagram corresponing to $L_{\overline{k(s)}}$.
Then there are a unique maximal split subtorus
$S\subseteq\Cent(\bar L)$ and a subgroup $\Gamma\le \Aut(D)$ such that $J$ is invariant under $\Gamma$,
and
for any $s\in\Spec R$ and any split maximal torus $T\subseteq\bar L_{\overline{k(s)}}$
the kernel of the natural surjection
\begin{equation}\label{eq:T-S}
X^*(T)\cong\ZZ\Phi\xrightarrow{\ \pi\ } X^*(S_{\overline{k(s)}})\cong \ZZ\Phi(S,G)
\end{equation}
is generated by all roots $\alpha\in D\setminus J$,
and by all differences $\alpha-\sigma(\alpha)$, $\alpha\in J$, $\sigma\in\Gamma$.
\end{lem}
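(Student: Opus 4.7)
The plan is to pass to the adjoint group $G^{\ad}$, where $\bar L$ sits as a Levi subgroup. Since $\bar L$ contains a maximal torus of $G^{\ad}$, the centralizer $\Cent_{G^{\ad}}(\bar L)$ coincides with $Z(\bar L)$ and is an $R$-subgroup scheme of multiplicative type. Over the connected base $\Spec R$ such a group scheme admits a unique maximal split $R$-subtorus, which produces $S$ and settles existence and uniqueness. Note that any other split subtorus of $\Cent(\bar L)$ is necessarily contained in $S$, so $S$ does not depend on auxiliary choices.

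Next I would compute $X^*(S)$ \'etale-locally. After a finite Galois cover $R\to R'$ the group $G_{R'}$ becomes split; choose a split maximal torus $T\subseteq \bar L_{R'}$, so that $X^*(T)=\ZZ\Phi$. The induced Galois action on $X^*(T)$ preserves $\Phi$ and the base $D$, hence factors through a finite subgroup $\Gamma\le\Aut(D)$, namely the usual $*$-action attached to the outer-form class of $G$. The subset $J\subseteq D$ is $\Gamma$-invariant because $P$ is defined over $R$, so $D\setminus J$ is $\Gamma$-invariant as well. The character lattice of the connected centre $Z(\bar L)^\circ$ is then $\ZZ\Phi/\langle D\setminus J\rangle$ with the inherited $\Gamma$-action, and $X^*(S)$ is its maximal $\Gamma$-coinvariant torsion-free quotient, which by Galois descent agrees with $X^*(S_{\overline{k(s)}})$ for every $s\in\Spec R$.

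The kernel of $\pi\colon\ZZ\Phi\to X^*(S)$ therefore receives two contributions: the sublattice $\langle D\setminus J\rangle$ that collapses $T$ onto $Z(\bar L)^\circ$, and all differences $x-\sigma(x)$, $x\in\ZZ\Phi$, $\sigma\in\Gamma$, enforcing $\Gamma$-triviality. Since $D$ spans $\ZZ\Phi$, it suffices to take $\alpha-\sigma(\alpha)$ with $\alpha\in D$; and for $\alpha\in D\setminus J$ these differences already lie in $\langle D\setminus J\rangle$ by $\Gamma$-invariance of $D\setminus J$, so only the differences with $\alpha\in J$ are required. This gives the stated generators of $\ker\pi$. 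The uniqueness of $\Gamma$ is then automatic, since it is recovered as the image of the Galois representation on $D$ determined by $G$.

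The main obstacle I expect is verifying that the \'etale-local description descends globally: one must check that $\Gamma$, $J$, and the generating set for $\ker\pi$ are independent of the choice of split maximal torus $T$, of the Galois cover $R\to R'$, and of the base point $s\in\Spec R$. This ultimately rests on the connectedness of $\Spec R$ together with the SGA3 result, recalled just before the lemma, that the Dynkin datum of $G$ and the Dynkin subdiagram cut out by $L$ are locally constant on $\Spec R$, hence constant. A secondary technical point is to control possible torsion in $X^*(Z(\bar L)^\circ)_\Gamma$; but since we are only asked for the kernel of the map to the torsion-free quotient $X^*(S)$, this torsion harmlessly disappears from the final statement.
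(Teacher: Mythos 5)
The paper does not actually prove this lemma: it is imported verbatim from \cite[Lemma 3.6]{St-serr}, so there is no in-paper argument to compare against. Judged on its own terms, your reconstruction follows the standard (and almost certainly the intended) route: $\Cent_{G^{\ad}}(\bar L)$ is of multiplicative type, over a connected base it has a unique maximal split subtorus $S$ containing every split subtorus, and $X^*(S)$ is computed \'etale-locally as the torsion-free quotient of the $\Gamma$-coinvariants of $X^*(Z(\bar L)^\circ)\cong\ZZ\Phi/\ZZ(D\setminus J)$, where $\Gamma$ is the image of the $*$-action of the \'etale fundamental group on the Dynkin diagram. The reduction of the generating set of $\ker\pi$ to $D\setminus J$ together with the differences $\alpha-\sigma(\alpha)$, $\alpha\in J$, is also correct, and your appeal to connectedness of $\Spec R$ for constancy of the Dynkin data is the right descent mechanism.

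Two points need repair. The more serious one is your final sentence: the torsion does not ``harmlessly disappear''. Passing from the coinvariants to their torsion-free quotient \emph{enlarges} the kernel of $\ZZ\Phi\to X^*(S)$ to the saturation of $\ZZ(D\setminus J)+\langle\alpha-\sigma(\alpha)\rangle$; if that sublattice were not saturated in $\ZZ\Phi$, the lemma as stated would be false. You must therefore check that $\ZZ\Phi/\bigl(\ZZ(D\setminus J)+\langle\alpha-\sigma(\alpha),\ \alpha\in J,\ \sigma\in\Gamma\rangle\bigr)$ is already free. This is easy --- $D$ is a $\ZZ$-basis of $\ZZ\Phi$, so $\ZZ\Phi/\ZZ(D\setminus J)$ is free on the images of $J$, and $\Gamma$ permutes this basis, so the coinvariants are free on the orbit sums --- but it is a required verification, not something that can be waved away. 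The second, smaller point: a Galois element does not literally preserve a chosen maximal torus $T\subseteq\bar L_{R'}$, so to get a well-defined action on $(X^*(T),\Phi,D)$ one must correct by inner automorphisms (equivalently, work with the Dynkin scheme of SGA3, Exp.~XXIV). Since inner automorphisms act trivially on $Z(\bar L)$, this correction does not change the induced action on $X^*(Z(\bar L))$, which is what your computation actually uses; this should be said explicitly, as it is also where the independence of the choice of $T$ and of the splitting cover comes from.
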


In~\cite{PS}, we introduced a system of relative roots $\Phi_P$ with respect to a parabolic
subgroup $P$ of a reductive group $G$ over a commutative ring $R$. This system $\Phi_P$ was defined
independently over each member $\Spec A=\Spec A_i$
of a suitable finite disjoint Zariski covering
$$
\Spec R=\coprod\limits_{i=1}^m\Spec A_i,
$$
such that over each $A=A_i$, $1\le i\le m$, the root system $\Phi$ and the Dynkin diagram $D$ of $G$ is constant.
Namely, we considered the formal projection
$$
\pi_{J,\Gamma}\colon\ZZ \Phi
\longrightarrow \ZZ\Phi/\left<D\setminus J;\ \alpha-\sigma(\alpha),\ \alpha\in J,\ \sigma\in\Gamma\right>,
$$
and set $\Phi_P=\Phi_{J,\Gamma}=\pi_{J,\Gamma}(\Phi)\setminus\{0\}$. The last claim of Lemma~\ref{lem:relroots}
allows to identify $\Phi_{J,\Gamma}$ and $\Phi(S,G)$ whenever $\Spec R$ is connected.

\begin{defn}
In the setting of Lemma~\ref{lem:relroots} we call $\Phi(S,G)$ a \emph{system
of relative roots with respect to the parabolic subgroup $P$ over $R$} and denote it by $\Phi_P$.
\end{defn}

If $A$ is a field or a local ring, and $P$ is a minimal parabolic subgroup of $G$,
then $\Phi_P$ is nothing but the relative root system of $G$ with respect to a maximal split subtorus
in the sense of~\cite{BoTi} or, respectively,~\cite[Exp. XXVI \S 7]{SGA3}.

We have also defined in~\cite{PS} irreducible components of systems of relative roots, the subsets of positive and negative
relative roots, simple relative roots, and the height of a root. These definitions are immediate analogs
of the ones for usual abstract root systems, so we do not reproduce them here.

Let $R$ be a commutative ring with 1.
For any finitely generated projective $R$-module $V$, we denote by $W(V)$ the natural affine scheme
over $R$ associated with $V$, see~\cite[Exp. I, \S 4.6]{SGA3}.
%$S\mapsto V\otimes_RS$ is represented by an affine group scheme
%$\WW(V)=\Spec\Sym^*(V^*)$, where $V^*$ is the dual $R$-module, $\Sym^*$ is the symmetric algebra.
Any morphism of $R$-schemes $W(V_1)\to W(V_2)$
is determined by an element $f\in\Sym^*(V_1^\vee)\otimes_R V_2$, where $\Sym^*$ denotes the symmetric algebra,
and $V_1^\vee$ denotes the dual module of $V_1$. If $f\in\Sym^d(V_1^\vee)\otimes_R V_2$,
we say that the corresponding morphism is
homogeneous of degree $d$.
%In particular, morphisms of degree $1$ are linear morphisms.
By abuse of notation, we also write $f:V_1\to V_2$ and call it {\it a degree $d$
homogeneous polynomial map from $V_1$ to $V_2$}. In this context, one has
$$
f(\lambda v)=\lambda^d f(v)
$$
for any $v\in V_1$ and $\lambda\in R$.

\begin{lem}\cite[Lemma 3.9]{St-serr}\label{lem:relschemes}.
In the setting of Lemma~\ref{lem:relroots}, for any $\alpha\in\Phi_P=\Phi(S,G)$ there exists a closed
$S$-equivariant  embedding of $R$-schemes
$$
X_\alpha\colon W\bigl(\Lie(G)_\alpha\bigr)\to G,
$$
satisfying the following condition.

\begin{itemize}
\item[\bf{($*$)}] Let $R'/R$ be any ring extension such that $G_{R'}$ is split with
respect to a maximal split $R'$-torus $T\subseteq L_{R'}$. Let $e_\delta$,
$\delta\in\Phi$, be a Chevalley basis of $\Lie(G_{R'})$, adapted to $T$ and $P$, and $x_\delta\colon\Ga\to G_{R'}$, $\delta\in\Phi$, be the associated
system of 1-parameter root subgroups
{\rm(}e.g. $x_\delta=\exp_\delta$ of~\cite[Exp. XXII, Th. 1.1]{SGA3}{\rm)}.
Let
$$
\pi:\Phi=\Phi(T,G_{R'})\to\Phi_P\cup\{0\}
$$
be the natural projection.
Then for any
$u=\hspace{-8pt}\sum\limits_{\delta\in\pi^{-1}(\alpha)}\hspace{-8pt}a_\delta e_\delta\in\Lie(G_{R'})_\alpha$
%\hspace{-1ex}\sum_{\substack{-2n \leq i \leq 2n \\[.7ex] i\neq 0}}
one has
\begin{equation}\label{eq:Xalpha-prod}
X_\alpha(u)=
\Bigl(\prod\limits_{\delta\in\pi^{-1}(\alpha)}\hspace{-8pt}x_{\delta}(a_\delta)\Bigr)\cdot
\prod\limits_{i\ge 2}\Bigl(\prod\limits_{
\st
\theta\in \pi^{-1}(i\alpha)
}
\hspace{-8pt}x_\theta(p^i_{\theta}(u))\Bigr),
\end{equation}
where every $p^i_{\theta}:\Lie(G_{R'})_\alpha\to R'$ is a homogeneous polynomial map of degree $i$,
and the products over $\delta$ and $\theta$ are taken in any fixed order.
\end{itemize}
\end{lem}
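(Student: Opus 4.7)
The plan is to realize $X_\alpha$ as an $S$-equivariant section of the projection onto a vector group quotient obtained by splitting a natural finite filtration of the unipotent $R$-subgroup of $G$ whose Lie algebra is $\bigoplus_{k\ge 1}\Lie(G)_{k\alpha}$, and then to read off~\eqref{eq:Xalpha-prod} by base change to a split cover.

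For each integer $k\ge 1$ the set $\Psi_k=\{j\alpha:j\ge k\}\cap\Phi_P\subset X^*(S)$ is additively closed and does not contain $0$, so by Lemma~\ref{lem:T-P}(ii) it gives rise to a smooth connected unipotent closed $R$-subgroup $U_k=U_{\Psi_k}\subseteq G$, normalized by $L\supseteq S$, with $\Lie(U_k)=\bigoplus_{j\ge k}\Lie(G)_{j\alpha}$. Since $\Phi_P$ is finite these form a finite descending chain $U_1\supseteq U_2\supseteq\cdots$, each $U_{k+1}$ is normal in $U_1$, and the successive quotient $U_k/U_{k+1}$ is a commutative unipotent $R$-group scheme on which $S$ acts through the single character $k\alpha$, hence canonically $S$-equivariantly isomorphic to the vector group $W(\Lie(G)_{k\alpha})$.

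I would then construct $X_\alpha\colon W(\Lie(G)_\alpha)\to U_1$ as an $S$-equivariant section of $U_1\twoheadrightarrow U_1/U_2\cong W(\Lie(G)_\alpha)$ by induction along the filtration. Suppose a section $\tilde X_\alpha^{(n)}$ of $U_1\to U_1/U_{n+1}$ is in hand. Pulling back the $W(\Lie(G)_{(n+1)\alpha})$-torsor $U_1/U_{n+2}\to U_1/U_{n+1}$ along $\tilde X_\alpha^{(n)}$ produces an $S$-equivariant torsor $T_{n+1}$ over $W(\Lie(G)_\alpha)$. Over a faithfully flat cover $R'/R$ on which $G$ splits (with fixed Chevalley basis), the ordered product $\prod_{\delta\in\pi^{-1}(\Psi_1)}x_\delta$ supplies an explicit $R'$-section of $U_{1,R'}$, trivializing $T_{n+1,R'}$. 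Two $S$-equivariant sections of $T_{n+1,R'}$ differ by an $S$-equivariant polynomial morphism $W(\Lie(G)_\alpha)_{R'}\to W(\Lie(G)_{(n+1)\alpha})_{R'}$, which must be homogeneous of degree exactly $n+1$ because $S$ acts on the source with weight $\alpha$ and on the target with weight $(n+1)\alpha$. This weight rigidity forces the descent cocycle for the local section along $R'/R$ into a single graded component, where it can be absorbed, yielding a global $\tilde X_\alpha^{(n+1)}$; iterating finitely many times produces $X_\alpha$.

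Finally, to verify~(*) I would base-change $X_\alpha$ to $R'$ and follow the construction layer by layer. Over $R'$, the subgroup $U_{k,R'}$ is the product of the one-parameter subgroups $x_\delta$ for $\delta\in\pi^{-1}(\Psi_k)$ in a fixed order, and unwinding the induction shows that the first layer contributes $\prod_{\delta\in\pi^{-1}(\alpha)}x_\delta(a_\delta)$, while the $i$-th layer for $i\ge 2$ contributes $\prod_{\theta\in\pi^{-1}(i\alpha)}x_\theta(p^i_\theta(u))$ with $p^i_\theta$ homogeneous of degree $i$ by the weight argument above. Assembling these contributions yields~\eqref{eq:Xalpha-prod}, and $X_\alpha$ is a closed embedding because its base change to $R'$ is, and closed embeddings descend along faithfully flat covers. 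The principal technical obstacle is Step~2: making the descent of the local sections rigorous, which rests entirely on the $S$-weight mismatch forcing the relevant cocycle to be a homogeneous degree-$(n+1)$ polynomial map that can be absorbed uniquely; once this rigidity is in place, the rest of the proof is a direct base-change computation.
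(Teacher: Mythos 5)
The paper does not actually prove this lemma: it is imported verbatim from \cite[Lemma 3.9]{St-serr}, whose proof in turn rests on the construction of relative root subschemes in \cite[Theorem 2]{PS} plus a check of $S$-equivariance. So there is no in-paper argument to compare against; the question is whether your construction is a viable substitute. It is, and it is close in spirit to the original: filtering $U_{\Psi_1}$ (where $\Psi_1=\{j\alpha: j\ge 1\}\cap\Phi_P$) by the subgroups $U_{\Psi_k}$ with vector-group quotients $W(\Lie(G)_{k\alpha})$, and realizing $X_\alpha$ as an $S$-equivariant splitting of $U_{\Psi_1}\to U_{\Psi_1}/U_{\Psi_2}$, is exactly the mechanism that underlies Lemma~\ref{lem:rootels}(iv) and the product formula~\eqref{eq:Xalpha-prod}, and the $S$-weight rigidity argument is the right way to see both the equivariance of the splitting and the homogeneity degrees of the $p^i_\theta$.

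Three points need to be firmed up before this is a complete proof. First, the claim that $U_k/U_{k+1}$ is canonically a vector group isomorphic to $W(\Lie(G)_{k\alpha})$ is not contained in Lemma~\ref{lem:T-P}: over a general base a smooth commutative unipotent group need not be a vector group, and you must invoke \cite[Exp. XXVI]{SGA3} (or descend the identification $\prod_{\theta\in\pi^{-1}(k\alpha)}x_\theta$ from the split cover) to get it. Second, ``the cocycle can be absorbed'' is doing real work: the weight argument only shows that the ambiguity between two equivariant sections is homogeneous of degree $n+1$, not that the descent cocycle is a coboundary. You need the vanishing of $H^1$ of a quasi-coherent module on an affine base (equivalently, exactness of the Amitsur complex of the faithfully flat extension $R'/R$) to trivialize the $W(\Lie(G)_{(n+1)\alpha})$-torsor $T_{n+1}$, and then the vanishing of $H^1(S,M)$ for the diagonalizable group $S$ acting on the module of sections to pick an $S$-fixed one. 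Third, condition ($*$) is quantified over an \emph{arbitrary} splitting extension $R'/R$ and adapted Chevalley basis, not only the cover used in the construction; your final verification (the section property forces the degree-one coefficients to be exactly $a_\delta$, and equivariance forces the higher coefficients to be homogeneous of degree $i$) does go through for any such $R'$, but this should be stated. With these standard inputs made explicit, the construction is correct.
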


\begin{defn}
Closed embeddings $X_\alpha$, $\alpha\in\Phi_P$, satisfying the statement of Lemma~\ref{lem:relschemes},
are called \emph{relative root subschemes of $G$ with respect to the parabolic subgroup $P$}.
\end{defn}

Relative root subschemes of $G$ with respect to $P$, actually,
depend on the choice of a Levi subgroup $L$ in $P$, but their essential properties stay the same,
so we usually omit $L$ from the notation.

We will use the following properties of relative root subschemes.

\begin{lem}\label{lem:rootels}\cite[Theorem 2, Lemma 6, Lemma 9]{PS}
Let $X_\alpha$, $\alpha\in\Phi_P$, be as in Lemma~\ref{lem:relschemes}.
Set $V_\alpha=\Lie(G)_\alpha$ for short. Then

(i) There exist degree $i$ homogeneous polynomial maps $q^i_\alpha:V_\alpha\oplus V_\alpha\to V_{i\alpha}$, i>1,
such that for any $R$-algebra $R'$ and for any
$v,w\in V_\alpha\otimes_R R'$ one has
\begin{equation}\label{eq:sum}
X_\alpha(v)X_\alpha(w)=X_\alpha(v+w)\prod_{i>1}X_{i\alpha}\left(q^i_\alpha(v,w)\right).
\end{equation}
%where each $q^i_\alpha\colon\WW(V_\alpha)\times_{\Spec R}\WW(V_\alpha)=\WW(V_\alpha\oplus V_\alpha)
%\to\WW(V_{i\alpha})$ is a homogeneous map of degree $i$.

(ii) For any $g\in L(R)$, there exist degree $i$ homogeneous polynomial maps
$\varphi^i_{g,\alpha}\colon V_\alpha\to V_{i\alpha}$, $i\ge 1$, such that for any $R$-algebra $R'$ and for any
$v\in V_\alpha\otimes_R R'$ one has
$$
gX_\alpha(v)g^{-1}=\prod_{i\ge 1}X_{i\alpha}\left(\varphi^i_{g,\alpha}(v)\right).
$$
%If $g\in S(R)$, then $\varphi^1_{g,\alpha}$ is multiplication by a scalar, and
%all $\varphi^i_{g,\alpha}$, $i>1$, are trivial.

(iii) \emph{(generalized Chevalley commutator formula)} For any $\alpha,\beta\in\Phi_P$
such that $m\alpha\neq -k\beta$ for all $m,k\ge 1$,
there exist polynomial maps
$$
N_{\alpha\beta ij}\colon V_\alpha\times V_\beta\to V_{i\alpha+j\beta},\ i,j\ge 1,
$$
homogeneous of degree $i$ in the first variable and of degree $j$ in the second
variable, such that for any $R$-algebra $R'$ and for any
for any $u\in V_\alpha\otimes_R R'$, $v\in V_\beta\otimes_R R'$ one has
\begin{equation}\label{eq:Chev}
[X_\alpha(u),X_\beta(v)]=\prod_{i,j\ge 1}X_{i\alpha+j\beta}\bigl(N_{\alpha\beta ij}(u,v)\bigr)
\end{equation}

(iv) For any subset $\Psi\subseteq X^*(S)\setminus\{0\}$ that is closed under addition,
the morphism
$$
X_\Psi\colon W\Bigl(\,\bigoplus_{\alpha\in\Psi}V_\alpha\Bigr)\to U_\Psi,\qquad
(v_\alpha)_\alpha\mapsto\prod_\alpha X_\alpha(v_\alpha),
$$
where the product is taken in any fixed order,
is an isomorphism of schemes.
\end{lem}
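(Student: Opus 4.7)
The plan is to reduce all four statements to the split case by invoking the characterization~($*$) of Lemma~\ref{lem:relschemes} together with faithfully flat descent. Specifically, after passing to a faithfully flat (in fact étale) extension $R'/R$ over which $G$ becomes split with a maximal torus $T\subseteq L_{R'}$, each $X_\alpha$ factors through a product of classical one-parameter root subgroups $x_\delta$ with $\delta\in\pi^{-1}(\alpha)$, and one can appeal to the standard structure theory of split reductive groups from~\cite[Exp.~XXII]{SGA3}.

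I would prove (iv) first, as it underlies the uniqueness needed for the other three parts. After base change to a split cover, the morphism $X_\Psi$ is, up to reordering, the familiar product morphism $\prod_{\delta\in\pi^{-1}(\Psi)} x_\delta\colon \mathbb{A}^{|\pi^{-1}(\Psi)|}_{R'}\to U_{\pi^{-1}(\Psi)}$, which is an isomorphism of schemes by the standard theory of split unipotent subgroups associated to an additively closed subset of roots. Being an isomorphism after faithfully flat base change, $X_\Psi$ is an isomorphism over~$R$. In particular, every element of $U_\Psi(R')$ admits a unique expression as an ordered product of $X_\gamma(v_\gamma)$, $\gamma\in\Psi$.

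For (i), (ii), (iii), both sides of the asserted identity lie in a suitable $U_\Psi$: for (i) we take $\Psi=\{i\alpha:i\ge 1\}\cap\Phi_P$; for (ii) the same set works because $L=\Cent_G(S)$ commutes with $S$, hence conjugation by $g\in L(R)$ preserves the $S$-weight decomposition; for (iii) we take $\Psi=\{i\alpha+j\beta:i,j\ge 1\}\cap\Phi_P$, which is nonempty precisely under the hypothesis $m\alpha\ne -k\beta$ and which is closed under addition inside $\Phi_P$. In each case, part~(iv) then guarantees that a decomposition as an ordered product $\prod X_{\gamma}(\cdot)$ is unique, so it remains to exhibit the claimed polynomial coefficients $q^i_\alpha$, $\varphi^i_{g,\alpha}$, $N_{\alpha\beta ij}$. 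After base change to a split cover, one expands each side via~\eqref{eq:Xalpha-prod} and applies the classical Chevalley commutator formula among the $x_\delta$ to collect terms according to the projection $\pi\colon\Phi\to\Phi_P\cup\{0\}$; comparing coefficients gives explicit polynomial maps with the asserted degrees of homogeneity (the degree counts come from $S$-equivariance of $X_\alpha$, since $X_\alpha(\lambda v)=\lambda^{\deg} X_\alpha(v)$ under the $\Gm$-action through $\alpha$).

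The main obstacle is not any single calculation but rather the descent step: the polynomial maps $q^i_\alpha$, $\varphi^i_{g,\alpha}$, $N_{\alpha\beta ij}$ are first constructed over $R'$, and one must verify that they descend to morphisms of $R$-schemes $W(V_\alpha\oplus V_\alpha)\to W(V_{i\alpha})$, etc. This follows from the uniqueness built into~(iv): since the decomposition on the left-hand side is canonical, the coefficient maps are independent of the choice of split cover and hence automatically satisfy the descent cocycle condition. A subsidiary nuisance is the dependence on ordering of the product in~\eqref{eq:Xalpha-prod}; this is absorbed by the freedom to choose the maps $p^i_\theta$ and $q^i_\alpha$, $N_{\alpha\beta ij}$ consistently.
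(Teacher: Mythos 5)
The paper gives no proof of this lemma: it is quoted from \cite[Theorem 2, Lemma 6, Lemma 9]{PS}, and your sketch follows essentially the same route as that source --- split $G$ by a faithfully flat extension, expand via~\eqref{eq:Xalpha-prod} and the classical Chevalley commutator formula of~\cite[Exp.~XXII]{SGA3}, and descend using the uniqueness of the product decomposition in (iv). Two small imprecisions worth fixing: after base change $X_\Psi$ is not the product of the $x_\delta$ merely ``up to reordering'' but differs from it by the correction factors $p^i_\theta$ of~\eqref{eq:Xalpha-prod}, which form a triangular (hence invertible) polynomial change of coordinates indexed by height; and in (iii) the hypothesis $m\alpha\neq -k\beta$ guarantees $0\notin\Psi$, so that $U_\Psi$ is unipotent and (iv) applies --- it does not guarantee that $\Psi$ is nonempty, nor does it need to.
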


Lemma~\ref{lem:rootels} immediately implies that for any $R$-algebra $R'$, one has
$$
U_{P^\pm}(R')=\left<X_\alpha(R'\otimes_R V_\alpha),\ \alpha\in\Phi_P^\pm\right>,
$$
and
$$
E_P(R')=\left<X_\alpha(R'\otimes_R V_\alpha),\ \alpha\in\Phi_P\right>.
$$

For any $\alpha\in\Phi_P$, consider the set of relative roots
$$
(\alpha)=\{i\alpha\ |\ i\in\ZZ,\ i\ge 1\}\subseteq\ZZ\Phi_P.
$$
Then the above equality can be rewritten as
\begin{equation}
E_P(R)=\left<X_{(\alpha)},\ \alpha\in\Phi_P\right>.
\end{equation}

\section{Main Theorem}

Our main result is the following theorem.

\begin{thm}\label{thm:Ksubset}
Let $G$ be a reductive group scheme over a connected finitely generated commutative ring $R$. Let $P$ be a proper
parabolic $R$-subgroup scheme of $G$ such that the rank of every irreducible component of $\Phi_P$ is $\ge 2$. The set
$\bigcup\limits_{\alpha\in\Phi_P}X_{(\alpha)}$ is a Kazhdan subset of $E_P(R)$.
\end{thm}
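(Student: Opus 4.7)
The plan is to realize $E_P(R)$ as an abstract group equipped with a strong $\Phi_P$-grading in the sense of~\cite{EJZK} and then invoke Theorem~\ref{thm:EJZK-Ksubset}.

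First I would check that $\Phi_P$ qualifies as a regular root system in the sense of Definition~\ref{def:EJZK-rootsys}. It spans the ambient $\RR$-vector space by construction, does not contain $0$ by definition, and is stable under $\alpha\mapsto-\alpha$ because the opposite parabolic $P^-$ gives $\Phi_{P^-}=-\Phi_P$. Regularity then follows from Remark~\ref{rem:regsys}: given $\alpha\in\Phi_P$, the assumption that its irreducible component has rank $\geq 2$, combined with the structure of relative root systems from~\cite{PS}, produces a non-collinear $\beta\in\Phi_P$ with $x\alpha+y\beta\in\Phi_P$ for some non-zero $x,y\in\RR$.

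Next I would endow $E_P(R)$ with a $\Phi_P$-grading by setting $X_\alpha:=X_{(\alpha)}$ for each $\alpha\in\Phi_P$. Each $X_{(\alpha)}$ is a subgroup: closure under products follows from Lemma~\ref{lem:rootels}(i) applied to the individual $X_{i\alpha}$, while commutators between $X_{i\alpha}(v)$ and $X_{j\alpha}(w)$ for distinct $i,j$ lie again in $X_{(\alpha)}$ by Lemma~\ref{lem:rootels}(iii), since $ki\alpha+lj\alpha=(ki+lj)\alpha$ remains a positive multiple of $\alpha$. The generation axiom is immediate from the definition of $E_P(R)$ together with Lemma~\ref{lem:rootels}(iv). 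The commutator axiom $[X_{(\alpha)},X_{(\beta)}]\subseteq\langle X_{(\gamma)} \mid \gamma=a\alpha+b\beta\in\Phi_P,\,a,b\geq 1\rangle$ follows directly from the generalized Chevalley formula~\eqref{eq:Chev}: every term on the right-hand side has the form $X_{ki\alpha+lj\beta}(\,\cdot\,)\subseteq X_{(ki\alpha+lj\beta)}$ with integer coefficients $ki,lj\geq 1$.

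The main step is to verify that this grading is strong. Fix a Borel subset $\Phi_f$ and a core root $\gamma\in C_f$. The defining condition on the core geometrically forces the existence of $\alpha,\beta\in\Phi_f\setminus\RR\gamma$ with $\gamma$ in the positive cone spanned by $\alpha$ and $\beta$; writing $\gamma=i\alpha+j\beta$ with positive integers $i,j$, the Chevalley commutator formula~\eqref{eq:Chev} places the $(i,j)$-term inside $X_\gamma(V_\gamma)$, while all higher-height terms either belong to $X_{(\delta)}$ for some $\delta\in\Phi_f\setminus\RR\gamma$ or lie deeper inside the ray of $\gamma$, the latter being absorbed by an induction on the $f$-height of the roots contributing to the product. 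The main obstacle is ensuring that the commutator maps $N_{\alpha\beta ij}$ have image generating $V_{i\alpha+j\beta}$ as an abelian group, for otherwise we cannot recover all of $X_{(\gamma)}$. My plan here is to pass to a faithfully flat $R'/R$ over which $G$ splits, read off $N_{\alpha\beta ij}$ from the product formula~\eqref{eq:Xalpha-prod} of Lemma~\ref{lem:relschemes} in terms of the non-zero Chevalley structure constants of the split form, and descend the resulting surjectivity back to $R$. Once the strong $\Phi_P$-grading on $E_P(R)$ is established in this way, Theorem~\ref{thm:EJZK-Ksubset} immediately yields that $\bigcup_{\alpha\in\Phi_P}X_{(\alpha)}$ is a Kazhdan subset of $E_P(R)$.
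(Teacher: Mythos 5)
Your setup (the grading axioms, regularity, and the reduction to Theorem~\ref{thm:EJZK-Ksubset}) matches the paper, but the crucial step --- strongness of the grading --- has a genuine gap, and you have in fact put your finger on exactly where it is. The image of a single commutator map $N_{\alpha\beta ij}$ does \emph{not} in general generate $V_{i\alpha+j\beta}$, and your proposed repair does not work: over a splitting extension $R'$ the formula~\eqref{eq:Chev} produces terms $x_\rho(\cdot)$ only for those absolute roots $\rho\in\pi^{-1}(i\alpha+j\beta)$ that actually decompose as $\delta+\theta$ with $\delta\in\pi^{-1}(\alpha)$, $\theta\in\pi^{-1}(\beta)$, and such a decomposition can fail for some $\rho$, so there is no surjectivity to descend. (Descent is also the wrong tool here: the desired containment $X_\gamma(V_\gamma)\subseteq\langle\cdots\rangle$ is a statement about subgroups of the abstract group $G(R)$, and membership in a subgroup generated by $R'$-points does not descend to $R$-points.) The paper's fix is Lemma~\ref{lem:ABC} (following \cite[Lemma 11]{PS}): one uses \emph{two} decompositions, $\alpha=\beta+\gamma$ and $\alpha=(\beta-\gamma)+2\gamma$, and combines the commutators $[X_\beta,X_\gamma]$ and $[X_{\beta-\gamma},X_\gamma]$ so that together they sweep out all of $V_\alpha$; this is why the statement excludes precisely the index pairs $(1,1)$ and $(1,2)$ from the generating set.

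Two further points that your outline leaves unaddressed and that the paper needs. First, to make Lemma~\ref{lem:ABC} usable you cannot take an arbitrary decomposition of $\gamma$ inside the positive cone of $\Phi_f$: the paper's Lemmas~\ref{lem:Cfrel} and~\ref{lem:nc-roots} identify $C_f$ explicitly (its complement in $(\Phi_P)_f$ is exactly the union of rays through images of simple roots) and produce, for every $k\ge 1$, a decomposition $k\gamma=\beta_k+\gamma_k$ with $\gamma_k$ the image of a \emph{simple} root; this is what guarantees that the auxiliary roots $i(\beta_k-\gamma_k)+j\gamma_k$ occurring in Lemma~\ref{lem:ABC} still lie in $(\Phi_P)_f$ and off the ray $\RR\gamma$, and a descending induction over $k$ then handles the terms that land back on the ray. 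Second, Lemma~\ref{lem:ABC} carries a hypothesis excluding a configuration in type $G_2$; the paper disposes of the $G_2$ components separately by reducing to quasi-split groups of type $G_2$ or ${}^{3(6)}D_4$ and quoting the Chevalley-group results of~\cite{EJZK}. Without these ingredients the strongness verification does not go through.
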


This result is deduced by applying Theorem~\ref{thm:EJZK-Ksubset} of
M. Ershov, A. Jaikin-Zapirain, and M. Kassabov. We need to check that all conditions of this theorem are satisfied.

\begin{lem}\label{lem:Cfrel}
Let $\Phi$ be a root system in the sense of~\cite{Bou}. Let $D$ be a set of simple roots in $\Phi$;
we identify $D$ with the corresponding Dynkin diagram.
Take $J\subseteq D$.
Consider the formal projection
$$
\pi_J\colon\ZZ \Phi
\longrightarrow \ZZ\Phi/\ZZ(D\setminus J),
$$
and set $\Phi_J=\pi_J(\Phi)\setminus\{0\}$. Then

(i) The set $\Phi_J$ is a regular root system in the sense of~\cite{EJZK}
if and only if the rank of every irreducible component of  $\Phi_J$ is $\ge 2$.

(ii) For any $f\in F(\Phi_J)$ there is a set of simple roots $D'$ of $\Phi$ and a subset $J'\subseteq D'$
such that $\ZZ(D'\setminus J')\cap\Phi=\ZZ(D\setminus J) \cap\Phi$, and
$$
(\Phi_J)_f=\pi_{J}\bigl(\Phi^{+'}\bigr)\setminus\{0\}=\pi_{J'}\bigl(\Phi^{+'}\bigr)\setminus\{0\},
$$
where $\Phi^{+'}$ denotes the set of positive roots with respect to $D'$.

(iii) In the above setting, one has
$$
(\Phi_J)_f\setminus C_f=\Phi_J\cap\bigcup_{\alpha\in\pi_{J}(J')} \NN\alpha.
$$
\end{lem}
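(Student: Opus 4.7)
The plan is to prove the three parts in sequence, with part (ii) providing the structural input needed for part (iii).

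For (i), the three axioms of Definition~\ref{def:EJZK-rootsys} hold automatically for $\Phi_J=\pi_J(\Phi)\setminus\{0\}$: it spans the quotient space by construction, omits zero by definition, and inherits closure under inversion from $\Phi$. The substantive content is regularity. If some irreducible component of $\Phi_J$ has rank $1$, no root in it can lie in an irreducible rank-$2$ subsystem, since any non-collinear partner must come from a different component whose $\RR$-span meets the rank-$1$ line trivially. Conversely, if every irreducible component has rank $\ge 2$, each such component arises as $\pi_J(\Phi_i)\setminus\{0\}$ for an irreducible Bourbaki component $\Phi_i$ with $|D_i\cap J|\ge 2$; for any $\alpha\in\Phi_J^{(i)}$, one finds a non-collinear $\beta\in\Phi_J^{(i)}$ and a combination $x\alpha+y\beta\in\Phi_J$ with $x,y\neq 0$ by lifting to preimages $\delta,\gamma\in\Phi_i$ (arranged via Weyl conjugation to be non-orthogonal) and projecting $\delta\pm\gamma\in\Phi_i$, at which point Remark~\ref{rem:regsys} applies.

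For (ii), I use a perturbation argument. First lift $f$ to $\tilde f=f\circ\pi_J$ on $\RR\Phi$; this is regular on $\Phi\setminus\Psi$, where $\Psi=\Phi\cap\ZZ(D\setminus J)$, but vanishes on all of $\Psi$. Choose a regular functional $g$ for $\Psi$, extended to $\RR\Phi$ and scaled small enough that $\tilde f+g$ has the same sign as $\tilde f$ on every $\alpha\in\Phi\setminus\Psi$. Then $\tilde f+g$ is regular on $\Phi$, with corresponding positive roots $\Phi^{+'}=\{\alpha\in\Phi:\tilde f(\alpha)>0\}\sqcup\Psi^+$, where $\Psi^+$ is the $g$-positive system of $\Psi$. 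Let $D'$ be the induced simple system of $\Phi$ and $D'_\Psi\subseteq\Psi^+$ the $g$-simple basis of $\Psi$. The inclusion $D'_\Psi\subseteq D'$ holds because any decomposition $\delta=\delta_1+\delta_2$ in $\Phi^{+'}$ with $\delta\in D'_\Psi$ forces $\tilde f(\delta_1)=\tilde f(\delta_2)=0$ (using $\tilde f\ge 0$ on $\Phi^{+'}$ and $\tilde f(\delta)=0$), hence $\delta_1,\delta_2\in\Psi^+$, contradicting the simplicity of $\delta$ inside $\Psi$. Setting $J'=D'\setminus D'_\Psi$ yields $\ZZ(D'\setminus J')\cap\Phi=\ZZ D'_\Psi\cap\Phi=\Psi=\ZZ(D\setminus J)\cap\Phi$, so $\pi_J$ and $\pi_{J'}$ coincide as maps, and both send $\Phi^{+'}$ onto $(\Phi_J)_f\cup\{0\}$ by construction.

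For (iii), I recast the statement convex-geometrically. Unfolding the definition, $\alpha\in(\Phi_J)_f\setminus C_f$ iff some $g\in F(\Phi_J)$ satisfies $\RR\alpha=\RR((\Phi_J)_f\cap(\Phi_J)_g)$, which, via strict separation by $\ker g$, is equivalent to $\RR_{\ge 0}\alpha$ being an extremal ray of the cone $\RR_{\ge 0}(\Phi_J)_f$. Part (ii) identifies this cone: every element of $(\Phi_J)_f=\pi_J(\Phi^{+'})\setminus\{0\}$ is a non-negative integer combination of $\pi_J(D')$, and $\pi_J(D'_\Psi)=0$, so the cone equals $\sum_{\sigma\in J'}\RR_{\ge 0}\pi_J(\sigma)$. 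Because $J'=D'\setminus D'_\Psi$, the image $\pi_J(J')$ is a $\ZZ$-basis of the free quotient $\ZZ\Phi/\ZZ\Psi$, hence linearly independent; the cone is thus simplicial, and its extremal rays are exactly $\RR_{\ge 0}\pi_J(\sigma)$ for $\sigma\in J'$. The roots of $\Phi_J$ lying on these rays are precisely $\Phi_J\cap\bigcup_{\alpha\in\pi_J(J')}\NN\alpha$. The main obstacle will be the bookkeeping in (ii) — coordinating $g$ so that it is small enough to leave the signs of $\tilde f$ unchanged on $\Phi\setminus\Psi$, regular on $\Psi$, and generic enough that the resulting $D'_\Psi$ sits inside $D'$ as a genuinely simple subset; once this is in place, (i) follows from combinatorial arguments about Dynkin subdiagrams and (iii) from the convex-geometric identification described above.
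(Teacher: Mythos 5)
Your overall structure matches the paper's, but your proofs of (ii) and (iii) take genuinely different routes. For (ii), the paper observes that $\tilde f=f\circ\pi_J$ makes $\{\alpha\in\Phi\mid \tilde f(\alpha)\ge 0\}$ and its negative a pair of opposite parabolic subsets of $\Phi$ and then simply invokes Bourbaki's classification of such subsets to produce $D'$ and $J'$; your perturbation of $\tilde f$ by a small functional that is regular on $\Psi=\Phi\cap\ZZ(D\setminus J)$ is in effect a self-contained proof of that classification in the case at hand, and your verification that the $g$-simple roots of $\Psi$ remain simple for $\tilde f+g$ (via $\tilde f\ge 0$ on $\Phi^{+'}$ and $\tilde f^{-1}(0)\cap\Phi=\Psi$) is correct. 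For (iii), the paper splits the two inclusions: it quotes \cite[Lemma 4.6]{EJZK} to show that any element of $(\Phi_J)_f$ which is a positive combination of two linearly independent elements of $(\Phi_J)_f$ lies in $C_f$, and then exhibits an explicit functional ($g(\alpha)=1$, $g(\beta)=-|\Phi_J|-1$ for the other $\beta\in\pi_J(J)$) to show each ray $\NN\alpha$, $\alpha\in\pi_J(J)$, avoids the core. You instead identify $(\Phi_J)_f\setminus C_f$ with the roots on extremal rays of the simplicial cone $\sum_{\sigma\in J'}\RR_{\ge 0}\pi_J(\sigma)$; this is cleaner conceptually, but the equivalence ``$\alpha\notin C_f$ iff $\RR_{\ge 0}\alpha$ is extremal'' is exactly the content the paper delegates to \cite[Lemmas 4.4, 4.6]{EJZK}, and if you want to avoid citing those you must actually carry out the separation argument in both directions (extremal $\Rightarrow$ exposed for polyhedral cones, plus a perturbation to make the separating functional lie in $F(\Phi_J)$; and conversely that a $g$ with $\RR(\Phi_f\cap\Phi_g)=\RR\alpha$ forces extremality). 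Both steps work, but they are not one-liners.

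The one place where your argument has a real soft spot is the converse direction of (i). The paper outsources it entirely to \cite[Lemma 5]{PS}, which is a nontrivial statement about relative root systems. Your sketch lifts $\alpha$ and a non-collinear $\beta$ to absolute roots $\delta,\gamma$ and says they can be ``arranged via Weyl conjugation to be non-orthogonal.'' As stated this is not justified: conjugating by a general element of $W(\Phi_i)$ changes the images under $\pi_J$, so the only conjugations available are by $W(D_i\setminus J)$ acting on one of the lifts, and you must prove that some such conjugate of $\gamma$ is non-orthogonal to $\delta$ (equivalently, that $\delta$ is not orthogonal to the whole span of the fiber $\pi_J^{-1}(\beta)$). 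This is true, but it is precisely the content of the cited lemma and needs an argument, not an aside.
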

\begin{proof}
(i) It is clear that if at least one component of $\Phi_J$ has rank 1, then $\Phi_J$ is not regular.
The opposite follows from Remark~\ref{rem:regsys} and~\cite[Lemma 5]{PS}.

(ii) Extend $\pi_{J,\Gamma}$ to $\RR\Phi$ by linearity, and set $g=f\circ\pi_J:\RR\Phi\to\RR$.
Then the sets $\{\alpha\in\Phi\ |\ g(\alpha)\ge 0\}$ and $\{\alpha\in\Phi\ |\ g(\alpha)\le 0\}$ are two
opposite parabolic sets of roots in $\Phi$ in the sense of~\cite{Bou}. Therefore, there is a set of simple
roots $D'$ in $\Phi$ and a subset $J'\subseteq D'$ such that
$$
\begin{array}{c}
\{\alpha\in\Phi\ |\ g(\alpha)\ge 0\}=\Phi^{+'}\cup\bigl(\Phi\cap\ZZ(D'\setminus J')\bigr);\\
\{\alpha\in\Phi\ |\ g(\alpha)\le 0\}=\Phi^{-'}\cup\bigl(\Phi\cap\ZZ(D'\setminus J')\bigr).\\
\end{array}
$$
Clearly, $D',J'$ are as required.

(iii) We can assume that $D=D'$ and $J=J'$ without loss of generality. Then $(\Phi_J)_f=\pi_J(\Phi^+)\setminus\{0\}$.
By~\cite[Lemma 4.6]{EJZK} any $\alpha\in(\Phi_J)_f$ that is a positive linear combination of $\ge 2$ linearly independent
elements of $(\Phi_J)_f$, belongs to $C_f$. Then
$(\Phi_J)_f\setminus C_f\subseteq\bigcup_{\alpha\in\pi_{J}(J)} \NN\alpha$. It remains to prove
that every positive multiple of every $\alpha\in\pi_J(J)$ belongs to $(\Phi_J)_f\setminus C_f$.
Since all elements of $\pi(J)$ are linearly independent
and integrally span $\Phi_J$ by definition of $\pi_J$
and $D$, we can define a linear map $g:\RR\Phi_J\to\RR$ by setting $g(\alpha)=1$, $g(\beta)=-|\Phi_J|-1$ for all
$\beta\in \pi(J)\setminus\{\alpha\}$. Clearly,
for any $\gamma\in(\Phi_J)_f=\pi_J(\Phi^+)$, if $\gamma\in\NN\alpha$, then $g(\gamma)>0$,
and otherwise $g(\gamma)<1\cdot |\Phi_J|-|\Phi_J|-1=-1$. In particular, $g(\gamma)\neq 0$ for all $\gamma\in\Phi_J$,
and
%(\Phi_J)_f\cap\{\gamma\in\Phi_J\ |\ g(\gamma)\ge 0\}
% Now, for any $\beta\in\pi(J)$, consider the finite set $S_\beta$
%of elements $\gamma\in\Phi_J$ such that $\beta$ occurs in $\gamma$ with non-zero coefficient.
%we alter the values of $g$ using the following algorithm. Running over all $\beta\in\pi_J(J)$ in any fixed
%order, replace the value $g(\beta)$ by $g(\beta)+\epsilon$ for some real number
%$1>\epsilon>0$ such that, as a result, $g(\gamma)\neq 0$ for all $\gamma\in S_\beta$. After this algorithm runs over
%all elements of $\pi_J(J)$, clearly, $g(\gamma)\neq 0$ for all $\gamma\in\Phi_J$, while
$\RR((\Phi_J)_g\cap(\Phi_J)_f)=\RR\alpha$.
\end{proof}

\begin{lem}\label{lem:nc-roots}
In the setting of Lemma~\ref{lem:Cfrel}, let $\Phi^+$ denote the set of positive roots with respect to $D$.
Let $\alpha\in\pi_J(\Phi^+)\setminus\{0\}$ be such that $\alpha\neq m\alpha_0$ for any $m\ge 1$ and
$\alpha_0\in\pi_J(D)\setminus\{0\}$. Then there are non-collinear
$\beta\in\pi_J(\Phi^+)\setminus\{0\}$ and $\gamma\in\pi_J(J)$ such that $\alpha=\beta+\gamma$.
\end{lem}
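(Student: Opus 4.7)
The plan is to lift $\alpha$ to a positive root $\tilde\alpha\in\Phi^+$ and to peel off one simple root at a time, using the standard fact that every positive root of height $\ge 2$ can be written as $\tilde\beta+\delta$ with $\tilde\beta\in\Phi^+$ and $\delta\in D$ (Bourbaki, VI.1.6).

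First I would observe that under the hypothesis on $\alpha$, no lift $\tilde\alpha\in\pi_J^{-1}(\alpha)\cap\Phi^+$ can itself be a simple root: if $\tilde\alpha\in D\setminus J$ then $\pi_J(\tilde\alpha)=0\neq\alpha$, while if $\tilde\alpha\in J$ then $\alpha=\pi_J(\tilde\alpha)\in\pi_J(J)$, contradicting the assumption $\alpha\neq m\alpha_0$ taken with $m=1$. Consequently any such $\tilde\alpha$ has height $\ge 2$ and decomposes as $\tilde\alpha=\tilde\alpha'+\delta_0$ with $\tilde\alpha'\in\Phi^+$ and $\delta_0\in D$. If $\delta_0\in D\setminus J$, then $\pi_J(\tilde\alpha')=\alpha$ as well, and I replace $\tilde\alpha$ by $\tilde\alpha'$ and iterate. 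Since the height of the lift strictly decreases while its projection remains equal to $\alpha\ne 0$, and since the dichotomy above rules out the iteration getting stuck at a simple-root lift, it must terminate with a decomposition $\tilde\alpha=\tilde\beta+\delta$ in which $\tilde\beta\in\Phi^+$ and $\delta\in J$. Setting $\gamma:=\pi_J(\delta)\in\pi_J(J)$ and $\beta:=\pi_J(\tilde\beta)=\alpha-\gamma$ then gives the desired additive decomposition.

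It remains to check that $\beta\in\pi_J(\Phi^+)\setminus\{0\}$ and that $\beta$ and $\gamma$ are non-collinear. Since $D$ is a $\ZZ$-basis of $\ZZ\Phi$, the image $\pi_J(J)$ is a $\ZZ$-basis of $\ZZ\Phi_J=\ZZ\Phi/\ZZ(D\setminus J)$, and $\beta$ is a non-negative integer combination of $\pi_J(J)$ because $\tilde\beta\in\Phi^+$. If $\beta=\lambda\gamma$ for some $\lambda\in\RR$, linear independence forces $\lambda\in\ZZ_{\ge 0}$; but $\lambda=0$ yields $\alpha=\gamma\in\pi_J(J)$, and $\lambda\ge 1$ yields $\alpha=(\lambda+1)\gamma$ with $\gamma\in\pi_J(J)$, both of which violate the hypothesis. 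Hence $\beta\neq 0$ and $\beta,\gamma$ are non-collinear, completing the proof.

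The main obstacle I anticipate is the book-keeping for the iteration, namely showing that peeling off simple roots cannot get stuck: this is exactly where the assumption $\alpha\notin\NN\cdot\pi_J(D)$ is used, because a simple-root lift would fall either in $D\setminus J$ (forcing $\alpha=0$) or in $J$ (forcing $\alpha\in\pi_J(J)$), and both are excluded. Once this is handled, the rest is a routine linear-independence argument in the free abelian group $\ZZ\Phi_J$.
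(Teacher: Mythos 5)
Your proof is correct and follows essentially the same route as the paper's: the author writes a lift $x\in\pi_J^{-1}(\alpha)\cap\Phi^+$ as $y_1+\cdots+y_n$ with all partial sums in $\Phi$, takes the last $y_i\in J$, and sets $\beta=\pi_J(y_1+\cdots+y_{i-1})$, $\gamma=\pi_J(y_i)$ --- which is exactly your iterated peeling of simple roots from $D\setminus J$, with the same use of the hypothesis $\alpha\notin\NN\pi_J(D)$ to rule out degenerate cases and the same linear-independence argument for non-collinearity.
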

\begin{proof}
For any $x\in\pi_J^{-1}(\alpha)$
there exists a sequence of simple roots $y_1,\ldots,y_n\in D$ such that
$x=y_1+\ldots+y_n$ and $y_1+\ldots+y_i\in\Phi$ for any $1\le i\le n$.
Let $i$ be the least possible index such that $y_{i+1},\ldots,y_n\in D\setminus J$. Then $y_i\in J$
and $\pi_J(y_1+\ldots+y_{i-1}+y_i)=\alpha$. Set $\beta=\pi_J(y_1+\ldots+y_{i-1})$ and
$\gamma=\pi_J(y_i)$. The relative roots $\beta$ and $\gamma$ are non-collinear since
otherwise we would have had $\alpha=k\pi_J(y_i)$ for some $k\ge 1$.
\end{proof}

\begin{lem}\label{lem:ABC}
Let $G$ be a reductive group scheme over a connected finitely generated commutative ring $R$. Let $P$ be a proper
parabolic $R$-subgroup scheme of $G$.
Suppose that $\alpha,\beta,\gamma\in\Phi_P$ are pairwise non-collinear roots such that
$\alpha=\beta+\gamma$. If the absolute root system $\Phi$ of $G$ has a component of type $G_2$, assume
moreover that $\beta-\gamma\not\in\Phi_P$. Then
\begin{multline*}
X_\alpha(V_\alpha)\subseteq
\langle \{X_{i\beta+j\gamma}(V_{i\beta+j\gamma})\ |\ i,j\ge 0,\ (i,j)\neq (1,1),\ i\beta+j\gamma\in\Phi_P\}\\
\cup
\{X_{i(\beta-\gamma)+j\gamma}(V_{i(\beta-\gamma)+j\gamma})\ |\ i,j\ge 0,\ (i,j)\neq (1,2),\ i(\beta-\gamma)+j\gamma\in\Phi_P\}
\rangle.
\end{multline*}
\end{lem}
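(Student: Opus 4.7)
The plan is to apply the generalized Chevalley commutator formula from Lemma~\ref{lem:rootels}(iii) to $[X_\beta(u), X_\gamma(v)]$, and (when $\beta - \gamma \in \Phi_P$) also to $[X_{\beta-\gamma}(u'), X_\gamma(v')]$, in order to place many elements $X_\alpha(w)$ inside the subgroup $H$ on the right-hand side of the claim; then to bootstrap from such $w$'s to all of $V_\alpha$ using Lemma~\ref{lem:rootels}(i).

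The identity
$$
[X_\beta(u), X_\gamma(v)] = X_\alpha\bigl(N_{\beta\gamma,1,1}(u,v)\bigr) \cdot \prod_{\substack{i,j\geq 1\\ (i,j)\neq (1,1)}} X_{i\beta+j\gamma}\bigl(N_{\beta\gamma,i,j}(u,v)\bigr)
$$
expresses $X_\alpha(N_{\beta\gamma,1,1}(u,v))$ as a product of elements of $H$: the factors on the right are of the form $X_{i\beta+j\gamma}(\cdot)$ with $i\beta+j\gamma\ne\alpha$ (non-collinearity of $\beta, \gamma$ forces $i\beta+j\gamma = \alpha \Rightarrow (i,j)=(1,1)$), and so they lie in the type (a) part of $H$; moreover $X_\beta(u), X_\gamma(v)\in H$ themselves. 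Analogously, when $\beta - \gamma \in \Phi_P$, which by hypothesis forbids a $G_2$-component in $\Phi$, the $(i,j)=(1,2)$ term of the expansion of $[X_{\beta-\gamma}(u'), X_\gamma(v')]$ is isolated in the same way and yields $X_\alpha(N_{\beta-\gamma,\gamma,1,2}(u',v')) \in H$.

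The heart of the argument is the combinatorial claim that the images of $N_{\beta\gamma,1,1}$ and, when applicable, $N_{\beta-\gamma,\gamma,1,2}$ together generate $V_\alpha$ as an $R$-module. Since the claim is Zariski-local, I would base-change to a faithfully flat extension $R'/R$ over which $G$ splits with respect to a maximal torus $T \subseteq L$. Property~$(*)$ of Lemma~\ref{lem:relschemes}, combined with the standard Chevalley commutator formula for absolute roots, allows these polynomial maps to be computed explicitly on the basis $\{e_\delta : \delta \in \pi^{-1}(\alpha)\}$ of $V_\alpha \otimes_R R'$, reducing the spanning condition to the following: every $\delta \in \pi^{-1}(\alpha)$ is hit with a unit coefficient either as $\delta_1 + \delta_2$ with $\delta_1 \in \pi^{-1}(\beta), \delta_2 \in \pi^{-1}(\gamma)$, or---in case $\beta-\gamma \in \Phi_P$---as $\delta_1' + 2\delta_2'$ with $\delta_1' \in \pi^{-1}(\beta-\gamma), \delta_2' \in \pi^{-1}(\gamma)$. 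A case-by-case verification through the irreducible Dynkin types (and their $\Gamma$-twists) completes this step; the exclusion of $G_2$ when $\beta-\gamma \in \Phi_P$ is precisely what makes the check succeed over an arbitrary base ring $R$, since otherwise the relevant absolute structure constants could acquire non-invertible factors.

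Once the spanning is known, one concludes via Lemma~\ref{lem:rootels}(i): from
$$
X_\alpha(v)\,X_\alpha(w) = X_\alpha(v+w) \prod_{i\geq 2} X_{i\alpha}\bigl(q^i_\alpha(v,w)\bigr)
$$
and the fact that $X_{i\alpha}(V_{i\alpha}) \subseteq H$ for $i \geq 2$ (because $i\alpha = i\beta + i\gamma$ with $(i,i) \neq (1,1)$), a simple induction on the number of summands yields $X_\alpha(V_\alpha) \subseteq H$. The main obstacle is the combinatorial spanning statement of the previous paragraph; the remaining steps are routine manipulations with the commutator and addition formulas for relative root subschemes.
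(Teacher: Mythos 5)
Your outline matches the paper's own proof, which simply defers to \cite[Lemmas 10 and 11]{PS}: isolate the $(1,1)$ (resp.\ $(1,2)$) term of the generalized Chevalley commutator formula, invoke the fact that the images of $N_{\beta\gamma 11}$ and $N_{\beta-\gamma,\gamma,1,2}$ span $V_\alpha$ (this is exactly \cite[Lemma 10]{PS}, proved by the split-case computation and case check you describe, with $G_2$ excluded because of non-invertible structure constants), and bootstrap with the addition formula. The argument is correct and essentially identical to the cited one.
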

\begin{proof}
The statement is proved exactly as Lemma~\cite[Lemma 11]{PS}, using~\cite[Lemma 10]{PS} and~\eqref{eq:Chev}.
\end{proof}

\begin{proof}[Proof of Theorem~\ref{thm:Ksubset}.]
By the generalized Chevalley commutator formula~\eqref{eq:Chev} of Lemma~\ref{lem:rootels}, the groups
$X_{(\alpha)}$, $\alpha\in\Phi_P$, constitute a $\Phi_P$-grading of $E_P(R)$.
By Theorem~\ref{thm:EJZK-Ksubset} of M. Ershov, A. Jaikin-Zapirain, and M. Kassabov, it remains to check that
this grading is strong.

In Lemma~\ref{lem:relroots} we identified $\Phi_P=\Phi(S,G)$ with a relative root system
$\Phi_{J,\Gamma}=\pi_{J,\Gamma}(\Phi)\setminus\{0\}$,
where
$$
\pi_{J,\Gamma}\colon\ZZ \Phi
\longrightarrow \ZZ\Phi/\left<D\setminus J;\ \alpha-\sigma(\alpha),\ \alpha\in J,\ \sigma\in\Gamma\right>,
$$
is the natural projection. Observe that for any root system $\Phi$ in the sense of~\cite{Bou} and
any $\Gamma\le\Aut(D)$ as above, the relative root system
$\Psi=\Phi_{D,\Gamma}=\pi_{D,\Gamma}(\Phi)\setminus\{0\}$ is isomorphic (as a set with partially defined addition)
to another root system in the sense of~\cite{Bou}, with a basis of simple roots $\pi_{D,\Gamma}(D)$.
Namely, If (an irreducible component of) $\Phi$
has type $A_l$, then $\Phi_{D,\Gamma}$ is a root system of type $BC_{l/2}$ or $B_{l+1/2}$; if $D$ is of type
$E_6$, then $\Phi_{D,\Gamma}$ is of type $F_4$, etc. Furthermore, the composition
\begin{multline*}
\pi_{\pi_{D,\Gamma}(J),\id}\circ\pi_{D,\Gamma}\colon \ZZ\Phi
\longrightarrow \ZZ\Phi/\left<\alpha-\sigma(\alpha),\ \alpha\in D,\ \sigma\in\Gamma\right>=\ZZ\Psi\\
\longrightarrow \ZZ\Phi/\left<D\setminus J;\ \alpha-\sigma(\alpha),\ \alpha\in J,\ \sigma\in\Gamma\right>
\end{multline*}
coincides with $\pi_{J,\Gamma}$. Therefore, $\Phi_P=\Psi_{\pi_{D,\Gamma}(J)}$ is subject to Lemma~\ref{lem:Cfrel}.
In particular, by Lemma~\ref{lem:Cfrel} (i) $\Phi_P$ is a regular root system in the sense of~\cite{EJZK}.
Furthermore, by Lemma~\ref{lem:Cfrel} (ii) and (iii) for any Borel subset $(\Phi_P)_f$ of $\Phi_P$ there is a set of simple roots
$E$ of $\Psi$ and a subset $I\subseteq E$ such that $(\Phi_P)_f=\pi_I(\Psi^+)$, where $\Psi^+$
is the set of simple roots of $\Psi$ with respect to $E$, and
$$
(\Phi_P)_f\setminus C_f=\Phi_P\cap\bigcup_{\alpha\in\pi_{I}(I)} \NN\alpha.
$$

Now pick any $\gamma\in C_f$.
We need to show that
\begin{equation}\label{eq:xg}
X_{(\gamma)} \subseteq \langle X_\beta \mid \beta \in (\Phi_P)_f\mbox{ and }
\beta\not\in\RR\gamma\rangle.
\end{equation}
By the above, $\gamma$ is not a positive multiple of any $\alpha_0\in\pi_I(I)$.
By Lemma~\ref{lem:nc-roots} for any $k\ge 1$ there are such non-collinear
$\beta_k,\gamma_k\in\pi_I(\Psi^+)\setminus\{0\}=(\Phi_P)_f$
that $k\gamma=\beta_k+\gamma_k$. Observe that if $\pi_I^{-1}(\gamma)$ intersects an irreducible
component of $\Psi$ of type $G_2$, then, since $\gamma$ lies in a component of $\Phi_P$ of rank $\ge 2$,
we can assume without loss of generality that $G$ is a quasi-split group of type $G_2$ or ${}^{3(6)}D_4$,
and then the statement follows from the results of~\cite{EJZK} on Chevalley groups. So we can assume that
$\pi_I^{-1}(\gamma)$ does not intersect any irreducible component of $\Psi$ of type $G_2$. Then by
Lemma~\ref{lem:ABC} $X_{k\gamma}(V_{k\gamma})$ belongs to the subgroup generated by
$X_{(i\beta_k+j\gamma_k)}$, $i,j\ge 0$, $(i,j)\neq (1,1)$, $i\beta_k+j\gamma_k\in\Phi_P$,
 and $X_{(i(\beta_k-\gamma_k)+j\gamma_k)}$,
$i,j\ge 0$, $(i,j)\neq (1,2)$, $i(\beta_k-\gamma_k)+j\gamma_k\in\Phi_P$.
Clearly, $i\beta_k+j\gamma_k\in(\Phi_P)_f$, if $i,j\ge 0$. On the other hand,
since, clearly, $\beta_k$ includes a simple relative root different from $\gamma$,
if $i(\beta_k-\gamma_k)+j\gamma_k\in\Phi_P$, then $j-i\ge 0$, and hence also $i(\beta_k-\gamma_k)+j\gamma_k\in(\Phi_P)_f$.
Running a descending induction over the (finite) set of $k\ge 1$ such that $k\gamma\in\Phi_P$, we conclude~\eqref{eq:xg}. The theorem is proved.
\end{proof}

\renewcommand{\refname}{References}

\end{document}